\newtheorem{theorem}{Theorem}
\newtheorem{claim}{Claim}
\newtheorem{lemma}{Lemma}
\newtheorem{remark}{Remark}
\newtheorem{prop}{Proposition}
\newtheorem*{fact}{Fact}
\newtheorem{definition}{Definition}
\numberwithin{equation}{section}
\begin{document}
	\captionsetup[figure]{labelfont={bf},name={},labelsep=period}
	\pagestyle{plain}
	
	\title{Maximal first Betti number rigidity for open manifolds of nonnegative Ricci curvature }
	\author{Zhu Ye\thanks{Supported partially by  National Natural Science Foundation of China [11821101] and  Beijing Natural Science Foundation [Z19003].\\	Email address: 2210501006@cnu.edu.cn.}}	
	\affil{Department of Mathematics, Capital Normal University}
	\date{}
	\maketitle
\begin{abstract}
	Let $M$ be an open Riemannian $n$-manifold  with nonnegative Ricci curvature. We prove that if the first Betti number of $M$ equals $n-1$, then $M$ is flat. 
\end{abstract}
	
\section{Introduction}

Let $M$  be a complete $n$-dimensional Riemannian manifold with nonnegative $\text{Ricci}$ curvature. 

If $M$ is compact, a classical result in Riemannian geometry says that the first Betti number $b_1(M) \leq n$ and ``=" holds if and only if  $M$ is a flat torus  (\cite{bochner1}, \cite{bochner2}). 

If $M$ is open (i.e. complete and not compact),  it has been known that $b_1(M)\leq n-1$ (\cite{split}, \cite{anderson}). In this paper, we will prove the following rigidity result:

\begin{theorem}\label{rigidty}
	If $M$ is an open  Riemannian $n$-manifold with $\text{Ric}_M\geq 0$, then $b_1(M)=n-1$ if and only if
	$M$ is  flat with a soul $T^{n-1}$, a torus of dimension $n-1$. 
\end{theorem}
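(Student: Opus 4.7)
The reverse direction---if $M$ is flat with soul $T^{n-1}$, then $b_1(M)=n-1$---is immediate from the soul theorem, which gives a deformation retract from $M$ onto its soul. The content is the forward implication, which I would prove by passing to an abelian cover and applying the Cheeger--Gromoll splitting theorem.

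Since $b_1(M)=n-1$, the free part of $H_1(M;\mathbb{Z})$ is $\mathbb{Z}^{n-1}$. Let $\hat{M}\to M$ be the Riemannian cover corresponding to the kernel of the composition $\pi_1(M)\to H_1(M;\mathbb{Z})\to\mathbb{Z}^{n-1}$. Then $\hat{M}$ is a noncompact complete Riemannian $n$-manifold with $\text{Ric}\geq 0$, on which $\mathbb{Z}^{n-1}$ acts freely and properly discontinuously by deck isometries.

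For any infinite-order isometry $g$ of a noncompact complete manifold with $\text{Ric}\geq 0$, the classical Cheeger--Gromoll limit-of-segments construction---applied to minimizing segments from $g^{-k}x$ to $g^k x$ as $k\to\infty$---produces a geodesic line. Using the $n-1$ generators of $\mathbb{Z}^{n-1}$, I would inductively split off $n-1$ Euclidean factors: at each stage a generator yields a line, the splitting theorem gives $\hat{M}\cong N'\times\mathbb{R}$, and I would verify that the remaining generators descend to infinite-order isometries of $N'$ so the process can continue. The outcome is an isometric splitting
\[
\hat{M}\cong N\times\mathbb{R}^{n-1}
\]
with $N$ a complete Riemannian $1$-manifold with $\text{Ric}\geq 0$. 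Any such $N$ is isometric to $\mathbb{R}$ or to a round $S^1$, both flat, so $\hat{M}$---and hence $M$---is flat. Applying the soul theorem to the flat manifold $M$, the soul $S$ is a closed flat submanifold with $b_1(S)=b_1(M)=n-1$; by Bieberbach's theorem, a closed flat $k$-manifold has $b_1\leq k$ with equality iff it is $T^k$, and since $\dim S\leq n-1$ (as $M$ is noncompact), this forces $S\cong T^{n-1}$.

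The principal obstacle is the iterated splitting step. Extracting a single line from one generator is classical, but peeling off an $(n-1)$-dimensional Euclidean factor requires showing that after each splitting $\hat{M}=N'\times\mathbb{R}$ the remaining generators of $\mathbb{Z}^{n-1}$ project onto $N'$ as infinite-order isometries---in particular that they do not all degenerate into bounded (e.g.\ rotational) motions on Euclidean factors already split off. This bookkeeping of how the abelian deck action decomposes across each splitting, which exploits the fact that any isometry of $\hat{M}$ must respect its maximal Euclidean-factor decomposition, is where the rank $n-1$ hypothesis has to be used in full.
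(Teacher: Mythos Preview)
Your approach is genuinely different from the paper's, and the obstacle you flag in the last paragraph is real and not resolved by what you wrote. The inductive splitting on the abelian cover $\hat{M}$ does not go through as stated: after one split $\hat{M}\cong N'\times\mathbb{R}$, a remaining generator $g_j$ may act as $(\phi_j,t_j)$ with $\phi_j\in\text{Isom}(N')$ of finite order, so it yields no line in $N'$. Your one-sentence appeal to the maximal Euclidean-factor decomposition is the right instinct but is not a proof. To make your route work one should pass to the universal cover $\tilde{M}=N^{k}\times\mathbb{R}^{n-k}$ (maximal splitting, $N$ without lines), observe that $\text{Isom}(N)$ is compact (bounded orbits, since $N$ has no line), deduce that the projection of the deck group to $\text{Isom}(\mathbb{R}^{n-k})$ has finite kernel and discrete image, and then invoke the classical fact that a discrete subgroup of $\text{Isom}(\mathbb{R}^{n-k})$ has rank at most $n-k$; together with the surjection onto $\mathbb{Z}^{n-1}$ this forces $k\le 1$. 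That is a complete argument, but it is several nontrivial steps beyond your sketch.

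The paper proceeds differently. It converts $b_1(M)=n-1$ into the statement that the deck group $\Gamma$ has polynomial \emph{orbit} growth of order $\ge n-1$ on $\tilde{M}$ (Propositions~\ref{betti} and~\ref{milnor}), and then proves directly (Theorem~\ref{flat}) that if $M$ is not flat, $\Gamma$ has orbit growth of order strictly less than $n-1$. The new ingredient is a sharpened volume estimate (Lemma~\ref{2}): on the line-free factor $N^{k}$, the set of points in $B_r(x_0)$ lying on minimal geodesics from $x_0$ that cannot be extended by a fixed length $h$ has volume $o(r^{k-1})$, improving Cheeger--Gromoll's $O(r^{k-1})$. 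Combined with the Cheeger--Gromoll observation that the $\Gamma$-orbit of a ball lies in a tube $N_d(x_0)\times\mathbb{R}^{n-k}$, this gives the strict growth bound. So the paper obtains rigidity from a quantitative volume improvement rather than from tracking how generators decompose across successive splittings; your route, once completed as above, would give an alternative and more algebraic proof.
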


\begin{remark}
	Note that $M$  in Theorem \ref{rigidty} has only two possible diffeomorphism types,  $\mathbb{R}\times T^{n-1}$ or $\mathbb{M}^2\times T^{n-2}$, where $\mathbb{M}^2$ is an open  M\"obius band.  In the former case, $M$ is isometric to $\mathbb{R}\times T^{n-1}$ (see (2) of Proposition \ref{int}).
\end{remark}	
Let's briefly explain a reason for $b_1(M)\leq n-1$.  Let  $\pi:(\tilde{M},\tilde{p})\rightarrow (M,p)$ be the Riemannian universal covering. Let $G$ be a finitely generated subgroup of $\pi_{1}(M,p)$.  Given a finite set of symmetric generators $S=\{g_1,\cdots,g_k\}$ of $G$, the word length $|g|$ of an element $g\in G$ is defined as
$	|g|=\min\{l\,|\, g=g_{i_1}g_{i_2}\cdots g_{i_l}\}$.	Put $U(r)=\{g\in G\,|\, |g|\leq r\}$ and let $\#(U(r))$ be the number of elements in $U(r)$.	
By the packing argument used in Milnor \cite{milnor} and the use of the Dirichlet fundamental domain, Anderson \cite{anderson} concluded that
\begin{equation}\label{eqa}
	\#(U(r))\cdot \text{Vol}(B_r(p))\leq \text{Vol}(B_{cr}(\tilde{p}))
\end{equation}             
for some constant $c>0$ independent of $r$. Since $\lim\limits_{r\to\infty}\frac{\text{Vol}(B_r(\tilde{p}))}{r^n}\leq \omega_n$ (	where $\omega_n$ is the volume of the unit ball in $\mathbb{R}^{n}$) by Bishop-Gromov volume comparison and $\liminf\limits_{r\to\infty} \frac{\text{Vol}(B_r(p))}{r}>0$
by Yau \cite{yau2}, one concludes that every finitely generated subgroup of $\pi_{1}(M,p)$ has polynomial  growth of order $\leq n-1$, thus $b_1(M)\leq n-1$ (see Proposition \ref{betti}). 

Note that  Cheeger and Gromoll also showed that $b_1(M)\leq n-1$  as an application of their splitting theorem ( Theorem 4 in \cite{split}). The approach to the main results of this paper is 
based on the proof of Theorem 4 in \cite{split}.

Let  $M$ be an open $n$-manifold with $\text{Ric}_M\geq 0$. By Theorem \ref{rigidty}, $b_1(M)=n-1$ implies that $M$ has linear volume growth and  $\tilde{M}$ has Euclidean volume growth. In this paper, we classified such manifolds: 

\begin{theorem}(part of Theorem \ref{main})\label{part}
	Let $M$ be an open $n$-manifold with $\text{Ric}_M\geq 0$ and let $\tilde{M}$ be its Riemannian universal cover. Then $M$ has linear volume growth and $\tilde{M}$ has Euclidean volume growth if and only if $M$ is flat with an $n-1$ dimensional soul.
\end{theorem}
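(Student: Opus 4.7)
The backward implication is elementary. If $M$ is flat with an $(n-1)$-dimensional soul $\Sigma$, then $\Sigma$ is a compact flat $(n-1)$-manifold and $M$ is the total space of the (rank-one) normal bundle over $\Sigma$; hence $\tilde M\cong\mathbb{R}^n$ (giving Euclidean volume growth $\omega_n r^n$), and projecting onto $\Sigma$ shows $\mathrm{Vol}(B_r(p))\sim 2r\cdot\mathrm{Vol}(\Sigma)$, so $M$ has linear volume growth.

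For the forward direction, the plan is to reduce to Theorem~\ref{rigidty} by proving $b_1(M)=n-1$. Substituting the hypothesized growth rates into Anderson's inequality~\eqref{eqa} immediately gives $\#(U(r))\le Cr^{n-1}$, so every finitely generated subgroup of $\pi_1(M,p)$ has polynomial growth of degree at most $n-1$, and the argument summarized in the introduction yields $b_1(M)\le n-1$. The crux is the matching lower bound. I would try to establish it by a reverse packing: with $D$ the Dirichlet fundamental domain at $\tilde p$, one has $B_r(\tilde p)=\bigsqcup_\gamma(\gamma D\cap B_r(\tilde p))$, and since $\pi|_{\gamma D}$ is injective and $1$-Lipschitz each piece has volume at most $\mathrm{Vol}(B_r(p))\lesssim r$; moreover a standard Dirichlet-domain argument shows $\gamma D\cap B_r(\tilde p)\ne\emptyset$ forces $d(\tilde p,\gamma\tilde p)\le 2r$. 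Combining with the Euclidean lower bound $\mathrm{Vol}(B_r(\tilde p))\ge\alpha r^n$ produces at least $\gtrsim r^{n-1}$ group elements within Riemannian distance $2r$, and converting to word length via the generator diameter yields the exact growth rate $\#(U(r))\asymp r^{n-1}$.

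With $\pi_1(M,p)$ known to have polynomial growth of exact order $n-1$, Gromov's theorem and Bass's formula identify it as virtually nilpotent of degree $n-1$. To upgrade this to $b_1(M)=n-1$ one must rule out non-abelian nilpotent behavior, where the lower central series would give $b_1<n-1$. Here I would apply the Cheeger--Gromoll splitting theorem to $\tilde M$, obtaining $\tilde M=\mathbb{R}^k\times N$ with $N$ line-free; $\Gamma=\pi_1(M)$ preserves this splitting, so the image $\phi(\Gamma)\subset\mathrm{Iso}(\mathbb{R}^k)$ is discrete (and by Bieberbach virtually a translation lattice), while $\ker\phi$ acts discretely on $N$. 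The main obstacle I foresee is combining the two volume hypotheses with the line-freeness of $N$ to force $N$ to be trivial (so $\tilde M=\mathbb{R}^n$) and $\phi(\Gamma)$ virtually $\mathbb{Z}^{n-1}$; once achieved, the hypothesis $b_1(M)=n-1$ of Theorem~\ref{rigidty} is verified and the flat structure with $T^{n-1}$ soul follows.
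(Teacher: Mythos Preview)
Your backward implication is fine, and your reverse Dirichlet-domain packing is correct---it is exactly inequality~(\ref{my}) of Theorem~\ref{dual} and gives $\#(D^\Gamma(\tilde p,2r))\gtrsim r^{n-1}$, i.e.\ \emph{orbit} growth of order $\ge n-1$. The gap is the next step, ``converting to word length via the generator diameter.'' The generator-diameter inequality (Proposition~\ref{milnor}) gives only $W(r)\subset D^\Gamma(\tilde p,hr)$, so orbit growth \emph{upper}-bounds word growth; the reverse inclusion would require a \v{S}varc--Milnor comparison, which fails because the $\Gamma$-action on $\tilde M$ is not cocompact. The remark following Theorem~\ref{dual} makes exactly this point, with an explicit warped-product counterexample to the word-length analogue of~(\ref{my}). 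You are also tacitly assuming $\pi_1(M)$ is finitely generated, which is open in this generality. Hence you cannot reach $b_1(M)=n-1$ and invoke Theorem~\ref{rigidty}, and the subsequent attempt to rule out non-abelian nilpotent behaviour inherits the same gap.

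The paper avoids the word-length detour entirely and feeds the orbit-growth lower bound directly into Theorem~\ref{flat}: if $M$ were not flat, then $\tilde M=N^k\times\mathbb R^{n-k}$ with $N$ line-free and $k\ge2$, and Lemma~\ref{1} traps $\Gamma\cdot\tilde p$ in a set $N_d(x_0)\times\mathbb R^{n-k}$ whose intersection with $B_r$ has volume $o(r^{n-1})$ by Lemma~\ref{2}, contradicting the orbit count. So $M$ must be flat, and Proposition~\ref{int} then pins the soul dimension at $n-1$. Your final paragraph is in fact reaching for this argument; the missing ingredient is the refined volume estimate of Lemma~\ref{2} on the set of non-ray directions in $N$, which is what lets the \emph{orbit}-growth hypothesis (rather than $b_1$) do the work.
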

We use  the following notion of  orbit growth in the proof of Theorem \ref{rigidty}  and Theorem \ref{part}:	
\begin{definition}\label{def1}
	Denote by $\#(A)$   the number of elements in a set $A$.		 Let $(X,d)$ be a metric space and let $\text{Isom}(X)$  be its  isometry group. Let $\Gamma$ be a subgroup of $\text{Isom}(X)$. For every $ x\in X$, put $	D^{\Gamma}(x,r)=\{g\in\Gamma :d(x,g(x))\leq r\}$.
	
	Given $p\in\mathbb{R}_+$, we say  $\Gamma$ has polynomial orbit growth related  to $x$ of order $\geq p \,\,(\leq p)$, if and only if 
	\begin{equation*}
		\liminf\limits_{r\to\infty} \frac{\#(D^{\Gamma}(x,r))}{r^p}>0\,\,\,(	\limsup\limits_{r\to\infty} \frac{\#(D^{\Gamma}(x,r))}{r^p}<\infty).
	\end{equation*}
	We say  $\Gamma$ has polynomial orbit growth related  to $x$
	of order $> p\,\,(<p)$, if and only if 
	\begin{equation*}
		\lim_{r\to\infty} \frac{\#(D^{\Gamma}(x,r))}{r^p}=\infty\,\,(=0).
	\end{equation*}
\end{definition}

One may verify without difficulty that the  polynomial orbit growth
properties defined above  do not depend on the choice of the base point $x$.

We will prove that either the conditions of Theorem \ref{rigidty} or Theorem \ref{part} imply that  the deck transformation group  $\Gamma$ of the Riemannian universal covering $\pi:\tilde{M}\rightarrow M$ has orbit growth of order $\geq n-1$. But we also have:
\begin{theorem}\label{flat}Let $M$ be an open $n$-manifold with $\text{Ric}_M\geq0$. Let $\pi:\tilde{M}\rightarrow M$ be the Riemannian universal covering with  deck transformation group $\Gamma$. If $M$ is not flat, then  $\Gamma$ has  polynomial orbit growth of order $< n-1$. 
\end{theorem}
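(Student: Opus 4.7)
The plan is to establish a volume inequality analogous to Anderson's \eqref{eqa} but in terms of orbit displacement rather than word length, and then argue by contrapositive.

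I would first prove the key inequality
\begin{equation*}
\#(D^\Gamma(\tilde{p}, r)) \cdot \text{Vol}(B_r(p)) \leq \text{Vol}(B_{2r}(\tilde{p})) \qquad (r > 0)
\end{equation*}
using the Dirichlet fundamental domain $F$ at $\tilde{p}$. Any point of $B_r(p)$ admits a minimizing lift lying in $F \cap B_r(\tilde{p})$, and $\pi|_F$ is injective up to measure zero, so $\text{Vol}(F \cap B_r(\tilde{p})) = \text{Vol}(B_r(p))$. For each $g \in D^\Gamma(\tilde{p}, r)$ the translate $g(F \cap B_r(\tilde{p}))$ sits inside $B_{2r}(\tilde{p})$ by the triangle inequality, and distinct translates are pairwise disjoint up to measure zero; summing volumes yields the inequality.

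Assuming the conclusion fails, there are $\epsilon > 0$ and $r_k \to \infty$ with $\#(D^\Gamma(\tilde{p}, r_k)) \geq \epsilon r_k^{n-1}$. Combining the key inequality with Yau's linear lower bound $\text{Vol}(B_{r_k}(p)) \geq c_0 r_k$ gives $\text{Vol}(B_{2r_k}(\tilde{p})) \geq c_0\epsilon\, r_k^n$, so $\text{Vol}(B_{2r_k}(\tilde{p}))/(2r_k)^n$ is bounded below by a positive constant. Bishop-Gromov monotonicity of $r \mapsto \text{Vol}(B_r(\tilde{p}))/r^n$ then forces $\tilde{M}$ to have Euclidean volume growth. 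Using the monotonicity of $r \mapsto \#(D^\Gamma(\tilde{p}, r))$, the bound $\#(D^\Gamma(\tilde{p}, r)) \gtrsim r^{n-1}$ propagates from $r = r_k$ to every $r \in [r_k, 2r_k]$; combined with the Bishop-Gromov upper bound $\text{Vol}(B_{2r}(\tilde{p})) \leq \omega_n (2r)^n$, this yields $\text{Vol}(B_r(p)) \leq Cr$ on those intervals, giving linear volume growth of $M$ at least along a suitable sequence. Theorem \ref{part} then forces $M$ to be flat, contradicting the hypothesis.

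The main obstacle is upgrading the linear bound for $M$ from the intervals $[r_k, 2r_k]$ to all large $r$, since a priori the gaps between consecutive $r_k$ could have unbounded ratio. I would address this either by promoting $\limsup_{r} \#(D^\Gamma(\tilde{p}, r))/r^{n-1} > 0$ to $\liminf > 0$ (a doubling-type regularity for orbit growth that should follow from the geometry of $\Gamma$ acting by isometries), or by bypassing Theorem \ref{part} entirely: use the abundance of short displacements to extract $n-1$ independent axial elements of $\Gamma$, apply the Cheeger-Gromoll splitting theorem iteratively to split off an $\mathbb{R}^{n-1}$ factor from $\tilde{M}$, and then observe that the remaining $1$-dimensional simply connected factor must be $\mathbb{R}$, so $\tilde{M} = \mathbb{R}^n$ and $M$ is flat.
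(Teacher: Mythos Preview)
Your main line of argument is circular. You invoke Theorem~\ref{part} to conclude that $M$ is flat, but in the paper Theorem~\ref{part} is explicitly stated as ``part of Theorem~\ref{main}'', and the proof of Theorem~\ref{main} (the implications $(e)\Rightarrow(d)\Rightarrow(a)$ and $(c)\Rightarrow(d)$) rests on Theorem~\ref{flat} itself. So Theorem~\ref{part} is not available to you here as a black box. The gap you flag about upgrading the linear upper bound on $\text{Vol}(B_r(p))$ from the intervals $[r_k,2r_k]$ to all large $r$ is also genuine, and neither of your proposed fixes is convincing: there is no general ``doubling regularity'' for orbit growth, and the sketch about extracting $n-1$ independent axial elements from an abundance of short displacements does not go through without substantial further work (nothing prevents many displacements from pointing in few directions, and elements of $\Gamma$ need not have axes at all).

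The paper's proof takes a completely different route that avoids both Theorem~\ref{part} and any volume-growth bootstrap. It applies the Cheeger--Gromoll splitting theorem once to write $\tilde M = N^k \times \mathbb{R}^{n-k}$ with $N$ containing no line; non-flatness forces $k\geq 2$. A compactness argument (Lemma~\ref{1}) confines the full isometry orbit of a ball in $N$ to a set $N_d(x_0)$ of points that either lie within distance $d$ of $x_0$ or lie on no long minimizing extension. The key new ingredient is Lemma~\ref{2}: the volume of $W_r^h(x_0)=D_r(x_0)\cap C_h(x_0)$ satisfies $\text{Vol}(W_r^h(x_0))/r^{k-1}\to 0$, proved by a measure-theoretic refinement of the Cheeger--Gromoll estimate in polar coordinates. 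Packing disjoint $\Gamma$-translates of a small ball into $(N_d(x_0)\cap B_{r+l}(x_0))\times B_{r+l}(0)$ then gives $\#(\Gamma_r)\leq C\big(d^k + f(r)r^{k-1}\big)r^{n-k}$ with $f(r)\to 0$, and since $k\geq 2$ this yields $\#(\Gamma_r)/r^{n-1}\to 0$ directly. Your alternative ``bypass'' idea is in spirit closer to this, but the actual mechanism is a volume estimate inside the non-split factor, not the production of additional splitting directions.
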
	
So the manifolds in Theorem \ref{rigidty} and Theorem \ref{part} can only be flat. Theorem \ref{flat} is obtained by improving the volume estimate in the proof of  Theorem 4 of  Cheeger-Gromoll  \cite{split}. The proof of Theorem \ref{flat} is given in Section 2.

Given Theorem \ref{flat}, we now prove Theorem \ref{rigidty}:

\begin{proof}[Proof of Theorem \ref{rigidty}]
	The condition $b_1(M)=n-1$  implies that $\pi_{1}(M)$ has  a finitely generated subgroup of polynomial growth  of order $\geq n-1$ (see Proposition \ref{betti}).  One then  concludes that the deck transformation group of the Riemannian universal covering of $M$ has
	polynomial orbit growth of order $\geq n-1$ (Proposition \ref{milnor}). By Theorem \ref{flat}, $M$ can only be flat. The soul theorem and the classical Bochner rigidity then imply that a soul of $M$ is $T^{n-1}$.
\end{proof}
To prove Theorem \ref{part}, we need the following  Theorem:
\begin{theorem}\label{dual}	Let  $N$ be a complete Riemannian $m$-manifold. Let  $\pi:(\bar{N},\bar{x}_0)\rightarrow (N,x_0)$ be a normal covering with deck transformation group $G$.   Then	for every $r>0$ we have:
	\begin{align}
		\label{my}	\#(D^G(\bar{x}_0,2r))\cdot \text{Vol}(B_r(x_0))&\geq \text{Vol}(B_r(\bar{x}_0)),\\
		\label{and}	\#(D^G(\bar{x}_0,r))\cdot \text{Vol}(B_r(x_0))&\leq \text{Vol}(B_{2r}(\bar{x}_0)).
	\end{align}	
\end{theorem}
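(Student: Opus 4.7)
The plan is to run the classical packing/covering argument of Milnor--Anderson using a Dirichlet fundamental domain, which handles both inequalities in parallel. Let $F \subset \bar{N}$ be the Dirichlet fundamental domain centered at $\bar{x}_0$, i.e., $F = \{y \in \bar{N} : d(y,\bar{x}_0) \leq d(y, g(\bar{x}_0)) \text{ for all } g \in G\}$. I will use three standard properties: (a) the translates $\{g(F)\}_{g\in G}$ tile $\bar{N}$ (with pairwise disjoint interiors); (b) $\pi$ restricted to the interior of $F$ is an isometry onto a full-measure subset of $N$; and (c) $\pi$ is globally 1-Lipschitz, so $\pi(B_r(\bar{x}_0)) \subset B_r(x_0)$.

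\textbf{Proof of \eqref{my} (covering).} Given $y \in B_r(\bar{x}_0)$, pick $g \in G$ with $y \in g(F)$, so $g^{-1}(y) \in F$. Applying the defining inequality of $F$ with the element $g^{-1}$ yields $d(g^{-1}(y),\bar{x}_0) \leq d(g^{-1}(y), g^{-1}(\bar{x}_0)) = d(y,\bar{x}_0) \leq r$. By the triangle inequality, $d(\bar{x}_0, g(\bar{x}_0)) \leq d(\bar{x}_0,y) + d(y, g(\bar{x}_0)) = d(\bar{x}_0,y) + d(g^{-1}(y), \bar{x}_0) \leq 2r$, so $g \in D^G(\bar{x}_0, 2r)$. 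Therefore $B_r(\bar{x}_0) \subset \bigcup_{g \in D^G(\bar{x}_0,2r)} g(F)$, and
\begin{equation*}
\mathrm{Vol}(B_r(\bar{x}_0)) \leq \sum_{g \in D^G(\bar{x}_0,2r)} \mathrm{Vol}\bigl(F \cap g^{-1}(B_r(\bar{x}_0))\bigr).
\end{equation*}
Each summand equals the volume of its isometric image $\pi\bigl(F \cap g^{-1}(B_r(\bar{x}_0))\bigr) \subset \pi(B_r(\bar{x}_0)) \subset B_r(x_0)$, which yields \eqref{my}.

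\textbf{Proof of \eqref{and} (packing).} Set $F_r = F \cap \pi^{-1}(B_r(x_0))$; since $\pi|_F$ is an isometry onto its image, $\mathrm{Vol}(F_r) = \mathrm{Vol}(B_r(x_0))$. I first claim $F_r \subset B_r(\bar{x}_0)$. Given $y \in F_r$, lift a minimizing (or near-minimizing) path in $N$ from $x_0$ to $\pi(y)$ to a path in $\bar{N}$ starting at $\bar{x}_0$, ending at some lift $\bar{z} = h(y)$ with $d(\bar{x}_0, \bar{z}) \leq r$. Applying the Dirichlet inequality with the element $h^{-1}$ and using isometry gives $d(y,\bar{x}_0) \leq d(y, h^{-1}(\bar{x}_0)) = d(h(y), \bar{x}_0) \leq r$, proving the claim. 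Now for any $g \in D^G(\bar{x}_0,r)$ and $y \in g(F_r)$, $d(\bar{x}_0,y) \leq d(\bar{x}_0, g(\bar{x}_0)) + d(g^{-1}(y), \bar{x}_0) \leq 2r$, so $g(F_r) \subset B_{2r}(\bar{x}_0)$. Since the sets $\{g(F_r)\}_{g \in G}$ are pairwise disjoint (up to measure zero), summing over $g \in D^G(\bar{x}_0,r)$ gives \eqref{and}.

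The only mildly delicate step is the claim $F_r \subset B_r(\bar{x}_0)$ in the second inequality: one has to use the fact that the Dirichlet domain $F$ (unlike an arbitrary fundamental domain) is characterized by being nearest to $\bar{x}_0$ among all $G$-translates, which forces lifts of short curves from $x_0$ to stay close to $\bar{x}_0$. Once this is set up, both inequalities reduce to routine triangle-inequality bookkeeping together with the isometric tiling property of $F$.
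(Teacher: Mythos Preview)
Your proof is correct and follows essentially the same approach as the paper: both use the Dirichlet fundamental domain at $\bar{x}_0$ together with a covering argument for \eqref{my} and a packing argument for \eqref{and}. The only cosmetic differences are that you work with the closed Dirichlet domain and phrase the packing step via $F_r = F \cap \pi^{-1}(B_r(x_0))$, whereas the paper works with $B_r(\bar{x}_0)\cap F$ directly; your claim $F_r \subset B_r(\bar{x}_0)$ shows these sets coincide, so the two arguments are really the same.
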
	

\begin{remark}
	Inequality(\ref{and}) is the orbit version of Anderson's inequality(\ref{eqa}). 
	
	The  author  observed that the inverse inequality(\ref{my}) also holds for orbits.  Note that the similar result does not hold for the word length, i.e. the inequality like
	\begin{equation}\label{wrong}
		\#(U(cr))\cdot\text{Vol}(B_{cr}(x_0))\geq \text{Vol}(B_{r}(\bar{x}_0))
	\end{equation}   
	is false in general. This is the main reason why we use the notion of polynomial orbit growth instead of polynomial growth of a (finitely generated) group. 	For example, we may consider  $N=\mathbb{R}\times \mathbb{S}^1$ with warped product metric 
	$g_N=dr^2+\phi(r)ds^2 $ and its universal cover, where $ds^2$ is the canonical metric on $\mathbb{S}^1$, $\phi(r)=1$ for $|r|\leq 1$ and $\phi(r)=r^{-2}$ for $|r|>2$.	 A simple estimate shows that inequality(\ref{wrong}) does not hold for every $c>0$.

\end{remark}	

Let $M$  be an open  $n$-manifold with $\text{Ric}_M\geq0$. Let $\Gamma$ be the deck transformation group of its Riemannian universal covering. It is clear from the inequality(\ref{and}) that $\Gamma$ has polynomial orbit growth of order $\leq n-1$.

\begin{definition}
	We say $\Gamma$ has maximal orbit growth, if and only if $\Gamma$ has polynomial orbit growth of order $\geq n-1$, i.e. 
	\begin{equation*}
		\liminf\limits_{r\to\infty} \frac{\#(D^{\Gamma}(\tilde{x},r))}{r^{n-1}}>0 \text{ for some $\tilde{x}\in \tilde{M}$}.
	\end{equation*}	
\end{definition}

By Theorem \ref{flat}, if $\Gamma$ has maximal orbit growth, then $M$ is flat. In the following Theorem \ref{main}, we give several equivalent characterizations of the maximal orbit growth condition and classify such manifolds.

It is well known from the work of Milnor \cite{milnor} and Gromov \cite{expand} that if $\pi_1(M)$ is finitely generated, then it is almost nilpotent. The nilpotency rank of a finitely generated almost nilpotent group $G$ is   defined in 2.4.1 of \cite{nz} and is denoted by $\text{rank}(G)$, which equals the polycyclic rank of a finite index subgroup of $G$. Similar to the first Betti number, we have $\text{rank}(\pi_{1}(M))\leq n-1$ if $\pi_{1}(M)$ is finitely generated (see Proposition \ref{algebra2}).
\begin{theorem}\label{main}
	Let $M$ be an open $n$-manifold with $\text{Ric}_M\geq0$.  Let $\pi:\tilde{M}\rightarrow M$ be the Riemannian universal covering with  deck transformation group $\Gamma$.  Then we have:
	
	\noindent\text{(a) $M$ is flat with an $n-1$ dimensional soul.}
	
	\noindent if and only if  any one of the following conditions holds:
	
	\noindent(b) There is a finitely generated subgroup G of $\pi_{1}(M)$ such that $\text{rank}(G)= n-1$.
	
	\noindent(c) $\Gamma$ fails to have  polynomial orbit growth of order $< n-1$. That is, there exists a sequence  $r_i\to\infty$ such that 
	\begin{equation*}
		\lim_{i\to\infty} \frac{\#(D^{\Gamma}(\tilde{x},r_i))}{r_i^{n-1}}>0 \text{ for some $\tilde{x}\in \tilde{M}$} .
	\end{equation*}
	
	\noindent(d) $\Gamma$ has maximal orbit growth.
	
	\noindent(e) $M$ has linear volume growth and $\tilde{M}$ has Euclidean volume growth.
\end{theorem}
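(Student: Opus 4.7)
The plan is to establish the equivalences via the cycle $(a)\Rightarrow(b)\Rightarrow(d)\Rightarrow(c)\Rightarrow(a)$, together with the two implications $(a)\Rightarrow(e)$ and $(e)\Rightarrow(d)$; the only truly nontrivial step is $(c)\Rightarrow(a)$. I would first handle $(a)\Rightarrow(b)$ and $(a)\Rightarrow(e)$ simultaneously. Assuming $M$ is flat with an $(n-1)$-dimensional soul $S$, the soul theorem yields a deformation retraction of $M$ onto $S$, and Bieberbach's theorem gives that the compact flat $(n-1)$-manifold $S$ is finitely covered by $T^{n-1}$, so $\pi_1(M)\cong\pi_1(S)$ is virtually $\mathbb{Z}^{n-1}$ and hence almost nilpotent of rank $n-1$; taking $G=\pi_1(M)$ yields (b). The same structure implies that a finite cover of $M$ is isometric to $\mathbb{R}\times T^{n-1}$, so $\text{Vol}(B_r(x_0))\asymp r$ while $\tilde M=\mathbb{R}^n$ has Euclidean volume growth, which is (e).

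Next, $(b)\Rightarrow(d)$ is a Milnor--Wolf--Bass argument: a finitely generated almost nilpotent group of rank $n-1$ has polynomial word growth of order exactly $n-1$, and since each generator displaces a chosen lift $\tilde x\in\tilde M$ by at most a uniform constant $C$, the word ball $U(r/C)$ of $G$ injects into $D^G(\tilde x,r)\subseteq D^{\Gamma}(\tilde x,r)$, giving $\liminf_r\#D^{\Gamma}(\tilde x,r)/r^{n-1}>0$. The implication $(e)\Rightarrow(d)$ follows by inserting the bounds $\text{Vol}(B_r(x_0))\leq C_1 r$ and $\text{Vol}(B_r(\tilde x_0))\geq c_1 r^n$ into inequality~(\ref{my}) of Theorem~\ref{dual}, which immediately yields $\#D^{\Gamma}(\tilde x_0,2r)\geq(c_1/C_1)r^{n-1}$. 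The implication $(d)\Rightarrow(c)$ is tautological from the definitions.

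The main obstacle is $(c)\Rightarrow(a)$. Condition (c) says $\Gamma$ fails to have polynomial orbit growth of order strictly less than $n-1$, so the contrapositive of Theorem~\ref{flat} forces $M$ to be flat. Hence $\tilde M=\mathbb{R}^n$ and $\Gamma\leq\text{Isom}(\mathbb{R}^n)$ acts freely and properly discontinuously; since the rotation part $\Gamma/\Gamma_T$ is a discrete subgroup of $O(n)$ it is finite, so the translation subgroup $\Gamma_T\trianglelefteq\Gamma$ has finite index $m$ and is a free abelian lattice of some rank $k$. Because $M$ is noncompact one has $k\leq n-1$, and the orbit $\Gamma\tilde x$ is a union of $m$ cosets of the rank-$k$ lattice $\Gamma_T\tilde x$, so $\#D^{\Gamma}(\tilde x,r)\asymp r^k$; the hypothesis in (c) therefore forces $k=n-1$. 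The flat de~Rham decomposition then splits $M$ (up to finite cover) as $\mathbb{R}\times T^{n-1}$, and the $T^{n-1}$ factor descends to the $(n-1)$-dimensional soul, establishing (a). The delicate point is this final step: one must convert the merely subsequential lower bound in (c) into the exact rank of the translation lattice, using that for Bieberbach-type actions the orbit counting function is a clean power $r^k$ rather than an oscillating quantity.
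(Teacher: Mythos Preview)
Your overall logical scheme is fine and most steps match the paper's, but there is a genuine gap in your argument for $(c)\Rightarrow(a)$. After Theorem~\ref{flat} gives $\tilde M=\mathbb{R}^n$, you assert that ``the rotation part $\Gamma/\Gamma_T$ is a discrete subgroup of $O(n)$, hence finite.'' This is precisely the first Bieberbach theorem, and it requires the action to be \emph{cocompact}; for a merely discrete, free, properly discontinuous action it is false. A concrete counterexample: let $\Gamma=\mathbb{Z}$ act on $\mathbb{R}^3=\mathbb{C}\times\mathbb{R}$ by the screw motion $(z,t)\mapsto(e^{i\theta}z,\,t+1)$ with $\theta/\pi$ irrational. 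Then $\Gamma_T=\{e\}$, the image of $\Gamma$ in $O(3)$ is dense in a circle, and the translation subgroup has infinite index. So you cannot conclude that $\Gamma_T$ is a rank-$k$ lattice, and your orbit count $\#D^{\Gamma}(\tilde x,r)\asymp r^k$ via ``$m$ cosets of a translation lattice'' collapses.

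The paper closes this gap by routing through the soul rather than applying Bieberbach directly on $\mathbb{R}^n$. Once $M$ is flat, the soul $S$ is a compact flat $k$-manifold, so Bieberbach applies to $\pi_1(S)\cong\pi_1(M)$ acting cocompactly on $\mathbb{R}^k$; this is the legitimate source of the finite-index $\mathbb{Z}^k$. Proposition~\ref{int} then uses the Sharafutdinov retraction to identify $\pi^{-1}(S)$ with a $k$-dimensional linear subspace $\Lambda\subset\mathbb{R}^n$ on which this $\mathbb{Z}^k$ does act by translations, yielding orbit growth exactly of order $k$; condition $(c)$ then forces $k=n-1$. Your sketch of $(a)\Rightarrow(e)$ has a similar, though milder, issue: the claim that a finite cover is \emph{isometric} to $\mathbb{R}\times T^{n-1}$ again needs the soul analysis (the $\mathbb{Z}^{n-1}$ may act with reflections on $\Lambda^{\perp}$, cf.\ Proposition~\ref{int}(2)), whereas the paper obtains $(e)$ from $(d)$ via inequality~(\ref{and}).
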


\begin{proof}
	$(a)\Rightarrow (b)$: The condition shows that $M$ has the fundamental group of a compact flat $(n-1)$-manifold. It follows from the Bieberbach theorem that $\mathbb{Z}^{n-1}$ is a finite index  subgroup of $\pi_{1}(M)$.			
	
	$(b)\Rightarrow (c)$: By Proposition \ref{algebra2}, $G$ has polynomial  growth of order $\geq n-1$. By Proposition \ref{milnor}, $G$ has polynomial orbit   growth of order $\geq n-1$. This implies $(d)$ holds, hence $(c)$ holds.
	
	$(c)\Rightarrow (d)$: If $(c)$ holds, Theorem \ref{flat} claims that $M$ is flat. This implies that $\Gamma$ has polynomial orbit growth of order $\geq k$ and $\leq k$, where $k$ is the dimension of a soul of $M$ (see (1) of Proposition \ref{int}).  Condition  (c) forces $k=n-1$. Hence (d) follows.
	
	$(d)\Leftrightarrow (e)$: If (d) holds, inequality(\ref{and}) of Theorem \ref{dual}, together with the fact that $M$ has at least linear volume growth show that $\tilde{M}$ has Euclidean volume growth.  Using inequality(\ref{and}) again, we find that $M$ has linear volume growth.
	
	If (e) holds, inequality(\ref{my}) of Theorem \ref{dual} shows that $\Gamma$ has maximal orbit growth.
	
	$(d)\Rightarrow (a)$: Theorem \ref{flat} shows that $M$ is flat. Since $M$ has maximal orbit growth, (1) of Proposition \ref{int}  asserts that  every soul of $M$  has dimension $n-1$.
\end{proof}

\section{Proof of Theorem \ref{flat}}

We prove Theorem \ref{flat} in this section. Let $N$ be a  complete Riemannian manifold.     For $p\in N,r>0$,
set \begin{align*}
	D_{r}(p)=&\{x\in N\,|\, d(x,p)\leq r \},\\
	C_{r}(p)=&\{x\in N\,|\,  \text{if $q\in N$ and $d(q,x)>r$, then $d(p,x)+d(x,q)-d(p,q)>0$}\}, \\
	N_r(p)=&D_r(p)\cup C_{r}(p).
\end{align*}	
Note that $C_r(p)$ is exactly the points $x\in N$ such that every minimal geodesic connecting $p$ and $x$ cannot extend  at the $x$ end to a minimal geodesic of length $>d(p,x)+r$. To prove Theorem 4 in \cite{split},	Cheeger-Gromoll made the following key observation:	

\begin{lemma}\label{1}
	Let $x_0\in N, R>0, \Lambda=\text{Isom}(N)$. If $N$ contains no line, then there exists  a $d>0$ such that $\Lambda\cdot B_R(x_0)\subset N_d(x_0)$. 
	
\end{lemma}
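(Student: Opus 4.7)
The plan is to argue by contradiction, extracting a line in $N$ from a hypothetical failure of the lemma — a standard Cheeger–Gromoll style limiting argument.

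Suppose, toward a contradiction, that no such $d$ works. Then for each positive integer $i$ there exist $g_i \in \Lambda$ and $x_i \in B_R(x_0)$ such that $y_i := g_i(x_i) \notin N_i(x_0)$. Unpacking, this means simultaneously $d(x_0, y_i) > i$ and $y_i \notin C_i(x_0)$. The negation of the $C_i$-condition provides a point $q_i \in N$ with $d(q_i, y_i) > i$ and $d(x_0, y_i) + d(y_i, q_i) \le d(x_0, q_i)$; combined with the triangle inequality this forces equality, so $y_i$ lies on some minimal geodesic $\sigma_i$ from $x_0$ to $q_i$, at distance $>i$ from each endpoint.

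Now apply the isometry $g_i^{-1}$. The curve $\tilde\sigma_i := g_i^{-1}\circ\sigma_i$ is again a minimal geodesic, passing through $x_i = g_i^{-1}(y_i)$, and its two subsegments on either side of $x_i$ have length $d(x_0, y_i) > i$ and $d(y_i, q_i) > i$ respectively. Reparametrize $\tilde\sigma_i$ by arc length so that $\gamma_i(0) = x_i$; then $\gamma_i$ is defined and minimizing on an interval containing $[-i, i]$, with $x_i$ in the interior and $x_i \in \overline{B_R(x_0)}$.

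Since $N$ is complete, $\overline{B_R(x_0)}$ is compact by Hopf–Rinow, so a subsequence gives $x_i \to x_\infty$, and a further subsequence makes the unit vectors $\gamma_i'(0) \in T_{x_i}N$ converge to some $v \in T_{x_\infty}N$. Let $\gamma$ be the geodesic in $N$ with initial data $(x_\infty, v)$. By continuous dependence on initial conditions and completeness, $\gamma$ is defined on all of $\mathbb{R}$, and on any fixed interval $[-T, T]$ we have $\gamma_i|_{[-T, T]} \to \gamma|_{[-T, T]}$ uniformly for $i$ large. Since each $\gamma_i|_{[-T, T]}$ minimizes once $i > T$, passing to the limit yields $d(\gamma(-T), \gamma(T)) = 2T$, so $\gamma|_{[-T, T]}$ is minimizing. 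As $T$ was arbitrary, $\gamma$ is a line in $N$, contradicting the hypothesis.

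The only subtle point is the correct unpacking of the definition of $C_r(x_0)$: its negation must supply a point $q_i$ at which a minimal geodesic from $x_0$ through $y_i$ can be extended by more than $i$. Once that is in hand, the argument is essentially the one in Cheeger–Gromoll: use the isometry to relocate the long minimizing segment to a compact set near $x_0$ and take a limit. There is no real obstacle beyond being careful that the constant $R$ (controlling where $x_i$ sits) and the unboundedly growing extension length $i$ (provided by the $C_i$-failure) together yield both compactness of the basepoints and divergence of the segment lengths.
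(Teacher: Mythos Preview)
Your proof is correct and follows essentially the same route as the paper: assume failure, obtain points $g_i(x_i)\notin N_i(x_0)$, extract from the negation of the $C_i$-condition a long minimal segment through $g_i(x_i)$, pull it back by $g_i^{-1}$ into the compact set $\overline{B_R(x_0)}$, and pass to a subsequential limit to produce a line. The paper's version is simply terser, compressing the compactness and convergence step into the single phrase ``$f_i^{-1}\circ\gamma_i$ converges to a line''; you have spelled out that step carefully, which is fine.
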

\begin{proof}
	Otherwise, there exist $ d_i\to\infty ,x_i\in B_R(x_0)$ and $f_i\in \Lambda$ such that	$f_i(x_i)\notin N_{d_i}(x_0)$, then  we can find  unit speed minimal geodesics $\gamma_i:[-d_i,d_i]\rightarrow N$ with $\gamma_i(0)=f_i(x_i)$. But $f_i^{-1}\circ\gamma_i$ converges to a line $\gamma$ with $d(x_0,\gamma)\leq R$. This is  a contradiction.
	
\end{proof}

The key observation of the author is contained in the following Lemma \ref{2}:
\begin{lemma}\label{2}Fix $h>0$. Let $N$ be an $m$-dimensional complete Riemannian manifold with nonnegative $\text{Ricci}$ curvature ($m\geq 2$). 	 For $p\in N,r>0$, set $W_r^h(p)=D_r(p)\cap C_{h}(p)$.  Then 
	\begin{equation*}
		\lim\limits_{r\to\infty}	\frac{\text{Vol}(W_r^h(p))}{r^{m-1}}=0.
	\end{equation*}
\end{lemma}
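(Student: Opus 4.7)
The plan is to pass to polar coordinates at $p$, apply the pointwise Bishop--Gromov Jacobian bound, and reduce the estimate to a tail statement about the cut-time function on the unit sphere $U_pN\subset T_pN$.

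Concretely, for $v\in U_pN$ let $c(v)\in(0,\infty]$ denote the cut time of $\gamma_v$ and let $J(t,v)$ be the Jacobian of $\exp_p$ in polar coordinates. Since $\text{Cut}(p)$ has measure zero, and since off $\text{Cut}(p)$ each $x=\gamma_v(t)$ has $\gamma_v$ as its unique minimal connection to $p$ extending minimally up to $\gamma_v(c(v))$, the condition $x\in C_h(p)$ becomes $c(v)-t\leq h$. Hence
\[
\text{Vol}(W_r^h(p))=\int_{U_pN}\int_{(c(v)-h)_+}^{c(v)\wedge r}J(t,v)\,dt\,d\sigma(v),
\]
the inner integral being understood as $0$ when $c(v)>r+h$. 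Applying the Bishop--Gromov bound $J(t,v)\leq t^{m-1}$ together with $\int_{(a-h)_+}^{a}t^{m-1}\,dt\leq h\,a^{m-1}$ yields
\[
\text{Vol}(W_r^h(p))\leq h\cdot F(r+h),\qquad F(R):=\int_{\{v:\,c(v)\leq R\}}c(v)^{m-1}\,d\sigma(v),
\]
so the lemma reduces to showing $F(R)=o(R^{m-1})$.

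For this I would fix $0<T<R$ and split the integration domain according to whether $c(v)\leq T$ or $T<c(v)\leq R$, obtaining
\[
F(R)\leq T^{m-1}\sigma(U_pN)+R^{m-1}\,\sigma(\{v:\,T<c(v)<\infty\}).
\]
Dividing by $R^{m-1}$ and sending $R\to\infty$ with $T$ fixed leaves only $\sigma(\{v:T<c(v)<\infty\})$; sending $T\to\infty$ then forces this to $0$, because $\{v\in U_pN:c(v)<\infty\}$ carries finite $\sigma$-mass (at most $\sigma(U_pN)<\infty$) and so its tails must vanish.

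The one real subtlety --- and the main obstacle, minor as it is --- is justifying the polar-coordinate formula for $\text{Vol}(W_r^h(p))$ on a full-measure set: the definition of $C_h(p)$ \emph{a priori} quantifies over all minimal geodesics through each point, which requires care on the cut locus. Since $\text{Cut}(p)$ has measure zero, however, the clean characterization $t\geq c(v)-h$ is unambiguous on its complement, and the rest of the argument rests only on the pointwise Bishop--Gromov bound $J(t,v)\leq t^{m-1}$ together with the tail property of the finite measure $\sigma$ on $U_pN$.
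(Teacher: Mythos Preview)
Your argument is correct and follows essentially the same route as the paper: pass to polar coordinates at $p$, invoke the Bishop--Gromov bound $J(t,v)\le t^{m-1}$, and exploit that the set $\{v\in U_pN:c(v)<\infty\}$ has finite $\sigma$-measure so its tails $\sigma(\{T<c(v)<\infty\})$ vanish as $T\to\infty$. The only cosmetic difference is that the paper chooses the concrete threshold $T=\sqrt{r}$ (so both pieces die in a single limit), whereas you run the standard two-step $\limsup$ argument with $T$ fixed first; these are interchangeable.
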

\begin{proof}
	Set
	\begin{align*}
		S_{p}N=&\{v\in T_{p}N\,|\,||v||=1 \},\\
		C_{p}N=&\{v\in S_{p}N\,|\, \text{exp}_{p}(tv)|_{[0,\infty)} \,\text{is not a ray}\},\\
		C^s_{p}N=&\{v\in C_{p}N\,|\,\text{exp}_{p}(tv)|_{[0,s+\epsilon)} \text{ is not a minimal geodesic for every $\epsilon>0$}\},\\
		g(s)=&m(C^s_{p}N),\text{where $m$ is the standard measure on $S_{p}N$.}
	\end{align*}
	
	Using the Bishop-Gromov volume element comparison in the polar coordinate of $T_pN$, we get 
	\begin{equation*}
		\begin{split}
			\text{Vol}(W_r^h(p))=&\int_{C^{r+h}_pN}\int_{a(\theta)}^{b(\theta)}\mu(t,\theta)dtd\theta \\
			\leq &\int_{C^{r+h}_pN}\int_{a(\theta)}^{b(\theta)}t^{m-1}dtd\theta\\
			\leq & m(S_pN)\cdot h\cdot r^{m-1},
		\end{split}
	\end{equation*}		
	where $\mu(t,\theta)dtd\theta$ is the volume form of $N$ at $\text{exp}_p(t\theta)$, $b(\theta)-a(\theta)\leq h$, and $0\leq a(\theta)\leq b(\theta)\leq r $ for every $\theta\in C_p^{r+h}N$ (This is exactly  the estimate  Cheeger-Gromoll obtained).
	
	The author observed that  by measure theory, $\lim\limits_{r\to\infty}m(C^{r+h}_{p}N\backslash C^{\sqrt{r}}_{p}N)=0$ since 
	\begin{equation*}
		m(C_{p}N)<\infty, C^{r_1}_{p}N\subset C^{r_2}_{p}N \,\,\text{for}\,\, r_1<r_2 ,\text{and} \,\,C_{p}N=\cup_{r>0}C^r_{p}N.
	\end{equation*}	
	Put $A_r=C^{r+h}_{p}N\backslash C^{\sqrt{r}}_{p}N$. Similar to (1), we have for $r>\max\{1,h\}$ 
	\begin{equation*}
		\begin{split}
			\text{Vol}(W_r^h(p)\backslash W^h_{\sqrt{r}}(p))=&\text{Vol}(C_h(p)\cap(D_r(p)\backslash D_{\sqrt{r}}(p)))\\
			=& \int_{A_r}\int_{a_1(\theta)}^{b_1(\theta)}\mu(t,\theta)dtd\theta \\
			\leq &\int_{A_r}\int_{r-h}^{r}t^{m-1}dtd\theta\\
			\leq & m(A_r)\cdot h\cdot r^{m-1},
		\end{split}
	\end{equation*}			
	where $b_1(\theta)-a_1(\theta)\leq h$ and  $\sqrt{r}\leq a_1(\theta)\leq b_1(\theta)\leq r $.	
	
	So 
	\begin{align*}
		\text{Vol}(W^h_r(p))&=\text{Vol}(W^h_{\sqrt{r}}(p))+\text{Vol}(W_r^h(p)\backslash W^h_{\sqrt{r}}(p))\\
		&\leq h(m(S_pN)r^{\frac{m-1}{2}}+m(A_r)r^{m-1})
	\end{align*}	
	The result follows.
	
\end{proof}

\begin{proof}[Proof of Theorem \ref{flat}]
	Fix a point $\tilde{p}\in \tilde{M}$. Since $M$ is not flat, we have by splitting theorem that	$(\tilde{M},\tilde{p})=(N^k\times\mathbb{R}^{n-k},(x_0,0)) $, where $N$ does not contain a line and  $k\geq 2$.  
	
	Fix an $l>0$ such that $B_l(g_1\cdot\tilde{p})\cap B_l(g_2\cdot\tilde{p})=\emptyset\,$ for any $ g_1,g_2\in\Gamma,g_1\neq g_2$. by Lemma \ref{1}, there is a $d>0$ such that $\Lambda\cdot B_l(x_0)\subset N_d(x_0)$, where $\Lambda=\text{Isom}(N)$. We conclude that
	\begin{equation}\label{eq1}
		\Gamma\cdot B_l(\tilde{p})\subset N_d(x_0)\times \mathbb{R}^{n-k} ,
	\end{equation}
	since the isometry group of $\tilde{M}$ also splits. Write
	\begin{equation*}
		\Gamma_r=D^\Gamma(\tilde{p},r)=\{g\in\Gamma\,|\,d(g\cdot \tilde{p},\tilde{p})\leq r \},
	\end{equation*}
	(\ref{eq1})  implies
	\begin{equation}\label{eq2}
		\sqcup_{g\in \Gamma_r}g\cdot B_l(\tilde{p})\subset (N_d(x_0)\cap B_{r+l}(x_0))\times B_{r+l}(0).
	\end{equation}
	By 	Lemma \ref{2}, we may write $\text{Vol}(W_r^d(x_0))=f(r)r^{k-1}$, where $\lim\limits_{r\to\infty}f(r)=0$. (\ref{eq2}) then gives 
	\begin{equation*}
		\begin{split}
			&\#(\Gamma_r)\cdot\text{Vol}(B_l(\tilde{p}))\\
			\leq& \omega_{n-k}\left( \text{Vol}(D_d(x_0))+\text{Vol}(W^d_{r+l}(x_0))\right)(r+l)^{n-k}\\ 	
			\leq& \omega_{n-k}(\omega_{k}d^k+f(r+l)r^{k-1} )\cdot (r+l)^{n-k},
		\end{split}
	\end{equation*}
	Since $k\geq 2$, this gives the result.
	
\end{proof}
\section{Realization of Maximal Orbit Growth}	
In this section, we prove the Propositions used in the Introduction and prove Theorem \ref{dual}. They are used to realize the maximal orbit growth under various  conditions.

To obtain the  polynomial growth property  of a finitely generated subgroup of $\pi_{1}(M)$, Milnor \cite{milnor} essentially used the orbit growth as a bridge:

\begin{prop}(Milnor \cite{milnor})\label{milnor}
	Let $X$ be a metric space,  $x_0\in X$, $\Gamma$  be a finitely generated subgroup of $\text{Isom}(X)$, and $S=\{g_1,\cdots,g_k\}$ be a set of symmetric generators of $\Gamma$. Put
	\begin{align*}
		W(r)&=\{g\in \Gamma\,|\, \text{the word length of $g$ related to } S, |g|\leq r  \},\\
		h&=\max\{ d(x_0,g_ix_0)\,|\,i=1,\cdots,k\}.
	\end{align*}
	Then $W(r) \subset  D^\Gamma(x_0,hr) $. Especially, if $\Gamma$ has polynomial growth of order $\geq $ p ($>$ p), then $\Gamma$ has 
	polynomial  orbit growth of order $\geq $ p ($>$ p).
\end{prop}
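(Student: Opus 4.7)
The plan is to establish the set-containment $W(r) \subset D^\Gamma(x_0, hr)$ by a direct triangle-inequality telescoping argument, and then derive the two polynomial-growth implications simply by rescaling the radius.

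First I would take an arbitrary $g \in W(r)$ and write it as a word $g = g_{i_1} g_{i_2} \cdots g_{i_l}$ with $l \leq r$. To bound $d(x_0, g\cdot x_0)$, I would telescope along the partial products $h_0 = \text{id}, h_j = g_{i_1}\cdots g_{i_j}$ (so $h_l = g$), obtaining
\[
d(x_0, g\cdot x_0) \;\leq\; \sum_{j=1}^{l} d(h_{j-1}\cdot x_0,\, h_j\cdot x_0).
\]
Since $h_{j-1}$ is an isometry and $h_j = h_{j-1}\,g_{i_j}$, the $j$-th term equals $d(x_0, g_{i_j}\cdot x_0) \leq h$. Summing yields $d(x_0, g\cdot x_0) \leq lh \leq hr$, which is precisely $g \in D^\Gamma(x_0, hr)$. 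Conceptually this is just the statement that the orbit map $\Gamma \to \Gamma\cdot x_0$ is $h$-Lipschitz from the word metric to $d$.

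Next I would deduce the growth statements. The inclusion gives $\#W(r) \leq \#D^\Gamma(x_0, hr)$. If $\Gamma$ has polynomial growth of order $\geq p$, i.e.\ $\liminf_{r\to\infty} \#W(r)/r^p > 0$, then substituting $s = hr$ produces
\[
\frac{\#D^\Gamma(x_0, s)}{s^p} \;\geq\; \frac{\#W(s/h)}{s^p} \;=\; h^{-p}\cdot \frac{\#W(s/h)}{(s/h)^p},
\]
whose liminf as $s \to \infty$ is positive, giving polynomial orbit growth of order $\geq p$. The order-$>p$ case is identical, with $\liminf > 0$ replaced by $\lim = \infty$.

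There is no real obstacle here; the argument is the standard Milnor observation that the word-length ball embeds into a metric ball in the orbit. The only thing to watch is that the definitions of $W(r)$ and $D^\Gamma(x_0,r)$ both use closed inequalities $\leq$, so the substitution $s = hr$ does not create any boundary issue, and the base-point independence already noted after Definition \ref{def1} lets us state the conclusion without specifying $x_0$.
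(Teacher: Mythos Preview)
Your proof is correct and follows essentially the same approach as the paper: both write $g$ as a word of length $\leq r$, telescope $d(x_0,gx_0)$ along the partial products, and use that each step is bounded by $h$ since the prefixes act by isometries. The only difference is that you spell out the rescaling argument for the polynomial-growth implications, whereas the paper simply writes ``The result follows.''
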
  
\begin{proof}
	If $g\in W(r)$, we can write $g=g_{i_1}g_{i_2}\cdots g_{i_t}$, where $t\leq r$, $g_{i_j}\in S$. We have 
	\begin{align*}
		d(x_0, gx_0)&= d(x_0,g_{i_1}g_{i_2}\cdots g_{i_t}x_0)\\
		&\leq d(x_0,g_{i_1}x_0)+d(g_{i_1}x_0,g_{i_1}g_{i_2}x_0)+\cdots+d(g_{i_1}g_{i_2}\cdots g_{i_{t-1}}x_0,g_{i_1}g_{i_2}\cdots g_{i_t}x_0)\\
		&\leq hr.
	\end{align*}
	The result follows.		
\end{proof}	
Next, we show how the first Betti number of a space controls the polynomial growth of its fundamental group. Recall: 
\begin{definition}(\cite{grouptheory} section 7.2)
	If $G$ is an  abelian group, the $\text{rank}$ of $G$ is  the maximal integer $k$ such that there exist $g_1,g_2,\cdots,g_k\in G$ which satisfy : 
	\begin{equation*}
		\text{if} \,\,\sum_{i=1}^{k}l_ig_i=0\,\,\text{for some $l_1,\cdots,l_k\in\mathbb{Z}$}, \text{then}\,\, l_1=l_2=\cdots=l_k=0.
	\end{equation*}
	$g_1,g_2,\cdots,g_k$ is called independent in this case. Denote by $\text{rank}(G)$  the $\text{rank}$ of $G$.
\end{definition}
\begin{remark}
	If $G$ is a finitely generated abelian group, then its rank equals its nilpotency rank.
\end{remark}  
\begin{definition}
	Let $X$ be a topological space. The first Betti number of $X$, denoted by $b_1(X)$,
	is the  rank of its first homology group $H_1(X)$. 
\end{definition}

From the proof of Theorem 1.3 in \cite{anderson}, we have the following algebraic proposition:

\begin{prop}\label{betti}
	Let $X$ be  a path-connected space, $x_0\in X$. If $b_1(X)=k$, then there is a subgroup $G$ of $\pi_1(X,x_0)$ such that $G$ is generated by $k$ elements and $G$ has polynomial growth of order $\geq k$. 
\end{prop}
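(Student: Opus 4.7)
The plan is to use the Hurewicz theorem to convert the topological hypothesis on $b_1(X)$ into an algebraic statement about the abelianization of $\pi_1(X,x_0)$, then exhibit $k$ specific generators whose products realize polynomial growth of order $k$.

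First I would invoke the Hurewicz isomorphism $h\colon \pi_1(X,x_0)^{\text{ab}}\xrightarrow{\cong} H_1(X)$, so that $b_1(X)=k$ means $H_1(X)$ contains $k$ independent elements $\alpha_1,\dots,\alpha_k$ in the sense of the rank definition stated in the excerpt. I would lift each $\alpha_i$ through the abelianization map to a loop class $\gamma_i\in\pi_1(X,x_0)$ and define $G$ to be the subgroup generated by $\{\gamma_1,\dots,\gamma_k\}$; by construction $G$ is generated by $k$ elements, and the abelianization map sends $G$ onto a subgroup of $H_1(X)$ containing the independent set $\{\alpha_1,\dots,\alpha_k\}$.

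Next I would establish the polynomial lower bound on the growth of $G$ with respect to the symmetric generating set $S=\{\gamma_1^{\pm 1},\dots,\gamma_k^{\pm 1}\}$. Consider the family of elements
\begin{equation*}
\mathcal{F}_r=\bigl\{\gamma_1^{n_1}\gamma_2^{n_2}\cdots\gamma_k^{n_k}\;\bigm|\; n_i\in\mathbb{Z},\ |n_i|\le \lfloor r/k\rfloor\bigr\}.
\end{equation*}
Every element of $\mathcal{F}_r$ has word length $\le r$ in $S$. I would then show that distinct tuples $(n_1,\dots,n_k)$ yield distinct elements of $G$: applying the abelianization $h$ sends $\gamma_1^{n_1}\cdots\gamma_k^{n_k}$ to $\sum n_i\alpha_i\in H_1(X)$, and the independence of the $\alpha_i$ forces $(n_1,\dots,n_k)$ to be recoverable from the image. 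Consequently $\#(U(r))\ge \#\mathcal{F}_r\ge (2\lfloor r/k\rfloor+1)^k\ge c\, r^k$ for some constant $c>0$ and all large $r$, giving polynomial growth of order $\ge k$.

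There is no serious obstacle here; the only thing to watch is the distinction between $H_1(X)$-rank and word-length growth, which is exactly bridged by the independence property built into the definition of rank quoted from \cite{grouptheory}. The argument uses nothing about $X$ beyond its being path-connected, so it can be stated purely algebraically: if an abstract group $\Gamma$ admits a homomorphism to an abelian group whose image has rank $\ge k$, then $\Gamma$ contains a subgroup of polynomial growth of order $\ge k$ generated by $k$ elements.
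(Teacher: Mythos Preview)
Your argument is correct and is essentially the same as the paper's: both lift $k$ independent classes from $H_1(X)$ through the Hurewicz map, take the subgroup they generate, and use independence in $H_1(X)$ to show that the products $\gamma_1^{n_1}\cdots\gamma_k^{n_k}$ are pairwise distinct, yielding $\gtrsim r^k$ elements of word length $\le r$. The only cosmetic difference is that the paper phrases the count as an injection $W_2(r)\hookrightarrow W_1(r)$ from the word-balls of $\mathbb{Z}^k$ into those of $G$, while you write the explicit box $|n_i|\le\lfloor r/k\rfloor$.
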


We include a proof  of Proposition \ref{betti}  here for the convenience of readers: 
\begin{proof}[Proof of Proposition \ref{betti}]
	Let   $h:\pi_1(X,x_0)\rightarrow H_1(X)$
	be  the Hurewicz homomorphism. Since $b_1(X)=k$, we can choose $\gamma_1,\cdots,\gamma_{k}\in H_1(X)$ such that they are independent. We can choose $g_i\in (h)^{-1}(\gamma_i)$ since  $h$ is surjective.  Set
	\begin{align*}
		S_1&=\{g_1,\cdots,g_{k},g_1^{-1},\cdots,g_{k}^{-1} \},\\
		S_2&=\{\gamma_1,\cdots,\gamma_{k},\gamma_1^{-1},\cdots,\gamma_{k}^{-1} \},\\
		G_i&=\langle S_i\rangle \,\,\text{for} \,\,i=1,2,\\
		W_i(r)&=\{g\in G_i\,|\, \text{the word length of $g$ related to } S_i, |g|\leq r  \}.
	\end{align*}
	For every $r>0$, there is an injection
	\begin{equation*}
		\begin{split}
			A_r:W_2(r)&\rightarrow W_1(r)\\
			l_1\gamma_1+\cdots+l_{k}\gamma_{k}&\mapsto g_1^{l_1}g_2^{l_2}\cdots g_{k}^{l_{k}}
		\end{split}
	\end{equation*}
	since $h\circ A_r=\text{id}$.	 This  implies that $G_1$ has polynomial growth of order $\geq k$ since $G_2$ has polynomial growth of order $\geq k$. 
	
\end{proof}

Similarly, we have:
\begin{prop}\label{algebra2}
	If $G$ is a finitely generated almost nilpotent group with $\text{rank}(G)=k$, then $G$ has polynomial growth of order $\geq k$. 
\end{prop}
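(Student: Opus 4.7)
The plan is to mimic the strategy of Proposition \ref{betti}: I will inject the $r$-ball of $\mathbb{Z}^k$ in the $\ell^1$ metric into the word ball $W(r)$ of $G$ for a suitable finite symmetric generating set, which will give $\#(W(r))\geq cr^k$. Since polynomial growth of a prescribed order does not depend on the choice of finite generating set, and since such a lower bound on a finite-index subgroup $N\leq G$ transfers to $G$ (a $T$-word of length $r$ in $N$ becomes an $S$-word of length at most $Cr$ in $G$ once each $T$-generator is written as a word of $S$-length at most $C$), it suffices to prove $\#(W(r))\geq cr^k$ for some convenient finite-index subgroup and generating set.

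I would first reduce to the case where $G=N$ is a torsion-free finitely generated nilpotent group of Hirsch length $k$. By definition $G$ contains a nilpotent subgroup of finite index, and any finitely generated nilpotent group contains a torsion-free nilpotent subgroup of finite index; both reductions preserve $\text{rank}$ by its very definition.

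The core step is to invoke a Malcev basis for $N$. Refining the upper central series of $N$ (whose successive quotients are torsion-free finitely generated abelian in the torsion-free nilpotent case) to a central series $1=N_0\triangleleft N_1\triangleleft\cdots\triangleleft N_k=N$ with $N_i/N_{i-1}\cong\mathbb{Z}$, I pick $g_i\in N_i$ whose image generates $N_i/N_{i-1}$. A short induction on $i$ shows that every $n\in N$ has a unique representation $n=g_1^{l_1}g_2^{l_2}\cdots g_k^{l_k}$ with $(l_1,\ldots,l_k)\in\mathbb{Z}^k$: the coset $nN_{k-1}$ determines $l_k$ uniquely, and the claim then reduces, via $g_k^{-l_k}n\in N_{k-1}$, to the inductive hypothesis. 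Taking $S=\{g_i^{\pm 1}:1\leq i\leq k\}$ as a symmetric generating set, the product $g_1^{l_1}\cdots g_k^{l_k}$ has $S$-word length at most $\sum_i|l_i|$, so the map $(l_1,\ldots,l_k)\mapsto g_1^{l_1}\cdots g_k^{l_k}$ injects the $\ell^1$-ball $\{l\in\mathbb{Z}^k:\sum|l_i|\leq r\}$ into $W(r)$. Since the cardinality of this lattice ball is $\geq cr^k$ for some $c>0$ and all large $r$, we conclude $\#(W(r))\geq cr^k$.

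The main obstacle I expect is establishing the Malcev basis cleanly. The existence of a central series with infinite cyclic successive quotients in a torsion-free finitely generated nilpotent group is classical (Malcev), but the refinement uses the nontrivial fact that the upper central series quotients of a torsion-free nilpotent group are themselves torsion-free abelian, which can then be broken further into $\mathbb{Z}$-factors via a choice of basis. I would cite a standard reference on nilpotent groups rather than reproduce this refinement in the paper.
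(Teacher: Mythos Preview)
Your proposal is correct and follows essentially the same route as the paper: reduce to a torsion-free finite-index nilpotent subgroup, take a subnormal series with infinite cyclic quotients (the paper cites \cite{grouptheory}, Theorem 17.2.2, for this; you phrase it as a Malcev basis obtained by refining the upper central series), and inject the $\ell^1$-ball of $\mathbb{Z}^k$ into the word ball via $(l_1,\dots,l_k)\mapsto g_1^{l_1}\cdots g_k^{l_k}$. The only cosmetic differences are that the paper merely asserts the injectivity of this map and uses any finite generating set containing the $h_i$, whereas you spell out the inductive uniqueness argument and observe that $\{g_i^{\pm1}\}$ already generates.
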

\begin{proof}
	It is easy to verify that a finitely generated group $A$ has polynomial growth of order $\geq k$ if a finitely generated subgroup  of $A$ has polynomial growth of order $\geq k$.	By Theorem 17.2.2 of \cite{grouptheory}, there exists  a finite index torsion-free subgroup of $G$. Denote it by $H$. Then $H$ is finitely generated and $\text{rank}(H)=k$.  It suffices to prove that $H$ has polynomial growth of order $\geq k$. By Theorem 17.2.2 of \cite{grouptheory}, there is a normal series 
	\begin{equation*}
		H=H_k\rhd H_{k-1}\rhd\cdots\rhd H_0=\{e\}
	\end{equation*}	
	such that each $H_i/H_{i-1}$ is isomorphic to $\mathbb{Z}$. Denote  by $[h_i]=h_iH_{i-1}$ a generator of  $H_i/H_{i-1}$. It is easy to check that
	\begin{align*}
		I:\mathbb{Z}^k&\rightarrow H\\
		(l_1,\cdots,l_k)&\mapsto h_1^{l_1}\cdots h_k^{l_k}
	\end{align*}
	is an injection. If we choose a finite set of generators of $H$ containing $h_1,h_2,\cdots,h_k$, then the fact that $I$ is an injection implies that	$H$ has polynomial growth of order $\geq k$ with respect to this set of generators.

\end{proof}

We now study the orbit growth of open flat manifolds:
\begin{prop}\label{int} Let $M$ be an  open flat  $n$-manifold. Let 
	$\pi:(\mathbb{R}^n,0^n)\rightarrow (M,x_0)$ be the universal covering map, with deck transformation group $\Gamma$. then 
	
	\noindent(1) $\Gamma$ has polynomial orbit growth of order $\geq k$ and $\leq k$, where $k$  is the dimension of a soul $S$ of $M$. 
	
	\noindent(2) If $T^{n-1}$ is a soul of $M$, then $M$ is diffeomorphic to $\mathbb{R}\times T^{n-1}$ or $\mathbb{M}^2\times T^{n-2}$. In the former case, $M$ is isometric to $\mathbb{R}\times T^{n-1}$.
\end{prop}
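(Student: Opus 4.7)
My plan is to exploit the soul theorem in the presence of flatness. The soul $S\subset M$ is compact, totally geodesic, and flat, and any totally geodesic flat submanifold of $\mathbb{R}^{n}$ is an affine subspace; hence every connected component of $\pi^{-1}(S)$ is an affine $k$-plane. Because the inclusion $S\hookrightarrow M$ is a homotopy equivalence, $\pi_{1}(S)=\pi_{1}(M)=\Gamma$, so the $\Gamma$-stabilizer of a chosen component $V$ is all of $\Gamma$; in particular $\Gamma$ preserves the single affine $k$-subspace $V$ and acts freely and cocompactly on it with quotient $S$.

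For part (1), fix $p\in V$. Freeness and cocompactness of the $\Gamma$-action on $V$ make $\Gamma\cdot p$ a discrete, relatively dense subset of $V$, and since $\Gamma\cdot p\subset V$ we have $B_{r}(p)\cap(\Gamma\cdot p)=B_{r}^{V}(p)\cap(\Gamma\cdot p)$. A standard volume estimate inside $V\cong\mathbb{R}^{k}$ then produces constants $0<c\leq C$ with $cr^{k}\leq\#(D^{\Gamma}(p,r))\leq Cr^{k}$ for all large $r$. Combined with the base-point independence of polynomial orbit growth noted after Definition \ref{def1}, this gives orbit growth of order both $\geq k$ and $\leq k$.

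For part (2), specialize to $k=n-1$, so $V$ is a hyperplane and $\Gamma=\pi_{1}(T^{n-1})=\mathbb{Z}^{n-1}$ acts freely and cocompactly on $V$; the action on $V$ is in fact faithful since any $g\in\Gamma$ fixing $V$ pointwise could at worst reflect $V^{\perp}$, which is incompatible with freeness of the $\Gamma$-action on $\mathbb{R}^{n}$. By Bieberbach the translation subgroup $T(\Gamma)$ has finite index, hence rank $n-1$, and so spans $V$; if some $g\in\Gamma\setminus T(\Gamma)$ had nontrivial rotational part $A$, commutativity would force $T(\Gamma)\subset\ker(I-A)$, a proper subspace, contradicting the spanning property. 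So $\Gamma$ acts on $V$ purely by translations, and on $V^{\perp}\cong\mathbb{R}$ through a homomorphism $\phi\colon\Gamma\to O(1)=\{\pm1\}$.

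If $\phi$ is trivial, $\Gamma$ acts trivially on $V^{\perp}$ and $M$ is isometric to $(V/\Gamma)\times\mathbb{R}\cong T^{n-1}\times\mathbb{R}$. If $\phi$ is surjective, let $\Gamma_{0}=\ker\phi$ with translation lattice $L\subset V$ and pick $g_{0}\in\Gamma\setminus\Gamma_{0}$ with translation part $t_{g_{0}}$; then $g_{0}^{2}\in\Gamma_{0}$ gives $2t_{g_{0}}\in L$ while $t_{g_{0}}\notin L$, so $t_{g_{0}}+L$ is a nontrivial $2$-torsion element of $V/L$. A $GL(n-1,\mathbb{Z})$ change of basis of $L$ normalizes this element to $(\tfrac{1}{2},0,\dots,0)$; the index-$2$ cover $\mathbb{R}^{n}/\Gamma_{0}\cong T^{n-1}\times\mathbb{R}$ then inherits a $\mathbb{Z}_{2}=\langle g_{0}\rangle$-action that splits off $T^{n-2}$ and collapses the remaining $S^{1}\times\mathbb{R}$ factor to an open M\"obius band, giving the diffeomorphism $M\cong\mathbb{M}^{2}\times T^{n-2}$. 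I expect the main technical point to be justifying the single-component claim for $\pi^{-1}(S)$, which relies on $S$ being a strong deformation retract of $M$; the distinction between diffeomorphism and isometry in the M\"obius case arises because the $GL(n-1,\mathbb{Z})$ basis change is generally not orthogonal.
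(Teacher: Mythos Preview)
Your proof is correct and follows essentially the same architecture as the paper: both use that $\pi^{-1}(S)$ is a single affine $k$-plane $V$ preserved by $\Gamma$ (the paper secures this via the Sharafutdinov retraction, you via surjectivity of $\iota_*$), and for part~(2) both reduce to analyzing the homomorphism $\Gamma\to O(1)$ on $V^{\perp}$ after establishing that $\Gamma$ acts on $V$ by translations.

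The one noteworthy difference is in part~(1). The paper invokes Bieberbach on $\pi_1(S)$ to extract a finite-index $\mathbb{Z}^k$ translation subgroup, gets orbit growth $\geq k$ from that subgroup, and then proves and applies a separate Lemma (finite-index subgroups control upper orbit growth) to get $\leq k$. Your argument is more economical: since $\Gamma$ acts freely, properly, and cocompactly on $V\cong\mathbb{R}^k$ and the orbit $\Gamma\cdot p$ sits inside $V$, a direct packing/covering estimate in $V$ gives $cr^k\leq\#D^{\Gamma}(p,r)\leq Cr^k$ with no appeal to Bieberbach or any auxiliary lemma. This is a genuine simplification. Conversely, the paper's route has the advantage that the finite-index lemma it isolates is a reusable statement. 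For part~(2) the two arguments are essentially interchangeable; your justification that $\Gamma$ acts on $V$ by translations (via commutativity forcing $T(\Gamma)\subset\ker(I-A)$) is exactly what underlies the paper's assertion that a flat $T^{n-1}$ soul has deck group acting by translations. One small point you left implicit: $t_{g_0}\notin L$ because otherwise $g_0$ composed with the inverse translation would be a reflection through a hyperplane parallel to $V$, violating freeness.
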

\begin{proof}
	(1)	Denote the Sharafutdinov retraction (\cite{shara}, see also \cite{yim}) by $r:M\rightarrow S$. $r$ is a distance nonincreasing strong deformation retraction. Choose a base point $x_0\in S$ and let $\iota:S\hookrightarrow M$ be the inclusion. We  make the identification $\pi_1(M,x_0)=\pi_1(S,x_0)$ by $\iota_*:\pi_1(S,x_0)\rightarrow \pi_1(M,x_0)$.
	
	The properties of $S$ and  $r$ guarantee the following:
	\begin{fact}
		(a) 	$\pi^{-1}(S)$ with the induced metric is totally geodesic in $\mathbb{R}^n$ and $$\pi|_{\pi^{-1}(S)}:(\pi^{-1}(S), 0^n)\rightarrow (S,x_0)$$ is the Riemannian universal covering map of $S$. Hence $\pi^{-1}(S)$ is a $k$-dimensional linear subspace  of $\mathbb{R}^n$.
		
		(b) 	For every $\gamma\in \pi_1(S,x_0),y\in \pi^{-1}(S), \gamma\cdot y=\iota_*(\gamma)\cdot y$, where both deck transformations are determined by the basepoint $0^n$.
	\end{fact}				
	Now Bieberbach theorem tells us that the pure translation subgroup  $\mathbb{Z}^k$ has finite index in $\pi_1(S,x_0)$.  By Fact (b),  $\iota_*(\mathbb{Z}^k)\leq \Gamma$  has polynomial  orbit growth of order $\leq k$ and $\geq k$.  It follows that $\Gamma$ has polynomial orbit growth of order $\geq k$. By  Lemma \ref{control} below, we obtain that  $\Gamma$  also has polynomial orbit growth of order $\leq k$. 
	
	\begin{lemma}\label{control}Let $(X,d)$ be a metric space and let $H$ be a subgroup of $\text{Isom}(X)$. Let $K$ be a finite index subgroup of $H$. If $K$ has polynomial orbit growth of order $\leq k$, then $H$ also has polynomial orbit growth of order $\leq k$.
		
	\end{lemma}
	
	\begin{proof}
		We may write $H$ as the disjoint union of left cosets: $H=\sqcup_{i=1}^{l}h_iK$.
		Fix an $x_0\in X$. Set $r_0=\max\limits_{i=1,\cdots,l}\{d(x_0,h_ix_0)\}$. If $h=h_jb\in D^H(x_0,r) $ for some $b\in K$, then	
		\begin{equation*}
			d(bx_0,x_0)\leq d(bx_0,h_j^{-1}x_0)+d(h_j^{-1}x_0,x_0)\leq r+r_0
		\end{equation*}	
		implies that $b\in D^K(x_0,r+r_0)$. So $\#(D^H(x_0,r))\leq l\cdot\#(D^K(x_0,r+r_0))$. The result follows.	
	\end{proof}			
	(2) 
	In this case, $\pi_{1}(M,x_0)=\pi_{1}(S,x_0)\cong\mathbb{Z}^{n-1}$. Since every $\gamma\in \mathbb{Z}^{n-1}$ acts on $\mathbb{R}^n$ by isometry, we may write $\gamma=(A_\gamma,v_\gamma)\in \text{O}(n)\ltimes \mathbb{R}^n$. Put $\Lambda=\pi^{-1}(S)$. Since $\mathbb{Z}^{n-1}$ acts on  $\Lambda$ by translation, Fact (a) and (b) shows that $A_\gamma|_\Lambda=\text{id}$ for every $\gamma\in \mathbb{Z}^{n-1}$. Let $\Lambda^\perp$ be the orthonormal completment of $\Lambda$, then $\Lambda^\perp$ is a 1-dimensional invariant  subspace of every $A_\gamma$. Each $A_\gamma|_{\Lambda^\perp}$ can be the reflection or  the identity map. Let $e_1=(1,\cdots,0),\cdots,e_{n-1}=(0,\cdots,1)$ be the canonical generators of $\mathbb{Z}^{n-1}$. Write $e_i=(A_i,v_i)$. If every $A_i$ is the identity map on $\Lambda^\perp$, then $M$ is isometric to $\mathbb{R}\times \mathbb{T}^{n-1}$.  Otherwise, without loss of generality, we may assume  that $A_1,\cdots, A_l$ are the reflection on $\Lambda^\perp$, $A_{l+1},\cdots,A_{n-1}$ are the identity map on $\Lambda^\perp$, then 
	\begin{align*}
		\Gamma&=\langle e_1,\cdots, e_{n-1}\rangle \\
		&=\langle e_1,e_2-e_1,\cdots,e_l-e_1,e_{l+1},\cdots,e_{n-1}\rangle\\
		&=\langle (A_1,v_1),(\text{id},v_2-v_1),\cdots, (\text{id},v_l-v_1),(\text{id},v_{l+1}),\cdots,(\text{id},v_{n-1})\rangle	.
	\end{align*}	
	In this case, $M$ is diffeomorphic to $\mathbb{M}^2\times T^{n-2}$.	
	
\end{proof}

Finally, we prove Theorem \ref{dual}:

\begin{proof}[Proof of Theorem \ref{dual}] 
	Proof of inequality(\ref{my}): We use the notion of Dirichlet domain as in \cite{anderson}. For every $g\in G$, put $D_g=\{x\in\overline{N}\,|\,d(x,\bar{x}_0)<d(x,g\bar{x}_0)\}$. Define the Dirichlet domain $F$ associated to $\bar{x}_0$ by $F=\bigcap\limits_{g\in G,g\neq e} D_g$.  Then $F$ is a fundamental domain for the action of $G$. Denote by $\overline{F}$ the closure of $F$ in $\bar{N}$.
	
	By the proof of Theorem 1.1 in \cite{anderson}, the following hold:
	\begin{align*}
		\pi(B_r(\bar{x}_0)\cap \overline{F})&=B_r(x_0),\\
		\partial F=\overline{F}\backslash F \,\,&\text{has Riemannian measure 0 in $\overline{N}$},\\
		\text{Vol}(B_r(\bar{x}_0)\cap F)&=\text{Vol}(B_r(x_0)).
	\end{align*}

	\begin{claim}\label{claim1}
		$\bigcup\limits_{g\in D(\bar{x}_0,2r)}g\cdot (B_r(\bar{x}_0)\cap \overline{F})\supset B_r(\bar{x}_0)$.
	\end{claim}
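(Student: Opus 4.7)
The plan is to use the fact that $\bar{N} = \bigcup_{g \in G} g\cdot \overline{F}$, so that any $y \in B_r(\bar{x}_0)$ can be written as $y = g \cdot z$ for some $g \in G$ and $z \in \overline{F}$; one then checks that both $z \in B_r(\bar{x}_0)$ and $g \in D^G(\bar{x}_0, 2r)$.

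First I would fix $y \in B_r(\bar{x}_0)$ and pick $g \in G$ with $y \in g \cdot \overline{F}$, so $z := g^{-1}y \in \overline{F}$. The key observation about $\overline{F}$ is that while $F$ is defined by the strict inequalities $d(x,\bar{x}_0) < d(x, h\bar{x}_0)$ for all $h \neq e$, the closure $\overline{F}$ satisfies the non-strict inequalities $d(z, \bar{x}_0) \leq d(z, h\bar{x}_0)$ for every $h \in G$. Applying this with $h = g^{-1}$, and using that $g^{-1}$ is an isometry,
\begin{equation*}
d(z, \bar{x}_0) \leq d(z, g^{-1}\bar{x}_0) = d(gz, \bar{x}_0) = d(y, \bar{x}_0) \leq r,
\end{equation*}
so $z \in B_r(\bar{x}_0) \cap \overline{F}$.

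Next I would bound $d(\bar{x}_0, g\bar{x}_0)$ by the triangle inequality:
\begin{equation*}
d(\bar{x}_0, g\bar{x}_0) \leq d(\bar{x}_0, y) + d(y, g\bar{x}_0) = d(\bar{x}_0, y) + d(g^{-1}y, \bar{x}_0) \leq r + r = 2r,
\end{equation*}
which places $g \in D^G(\bar{x}_0, 2r)$. Therefore $y = g \cdot z$ lies in $g \cdot (B_r(\bar{x}_0) \cap \overline{F})$ with $g \in D^G(\bar{x}_0, 2r)$, proving the claim.

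There is no real obstacle: the argument is essentially bookkeeping once one remembers that points of $\overline{F}$ are still closer to $\bar{x}_0$ than to any other orbit point (non-strictly). The only subtle point is verifying this closure property of $\overline{F}$, which follows by continuity of the distance function from the strict inequalities defining $F$.
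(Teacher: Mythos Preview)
Your argument is correct and shares the same skeleton as the paper's: given $y\in B_r(\bar x_0)$, find $g\in G$ with $g^{-1}y\in\overline{F}$, then verify $g^{-1}y\in B_r(\bar x_0)$ and $g\in D^G(\bar x_0,2r)$ via the triangle inequality. The difference is in how that $g$ is produced and how membership in $\overline{F}$ is handled. The paper chooses $g_y$ so that $g_y\bar x_0$ is a closest orbit point to $y$, and then argues---splitting into the cases where the closest orbit point is unique or not---that $g_y^{-1}y\in\overline{F}$, the non-unique case using that geodesics in a Riemannian manifold do not branch. You instead take the covering $\bar N=\bigcup_{g}g\cdot\overline{F}$ as given by the cited fundamental-domain property, and then recover the distance bound from the continuity observation $\overline{F}\subset\bigcap_{h}\{x:d(x,\bar x_0)\le d(x,h\bar x_0)\}$. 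Your route is a bit slicker (no case split, no appeal to geodesic non-branching, and it works in any proper metric space where the Dirichlet domain is a fundamental domain), while the paper's route is more self-contained in that it effectively re-establishes the needed piece of the covering property on the spot.
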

	\begin{proof}
		For every $y\in B_r(\bar{x}_0)$, let $g_y\bar{x}_0$ be a point in $G\cdot\bar{x}_0$ such that $d(g_y\bar{x}_0,y)=\min_{g\in G}\{d(g\bar{x}_0,y)\}$. Then $d(g_y\bar{x}_0,y)\leq d(\bar{x}_0,y)<r$. So $g_y\in D^G(\bar{x}_0,2r)$ and $g_y^{-1}y\in B_r(\bar{x}_0)$. If $\bar{x}_0$ is the unique point in $G\cdot\bar{x}_0$ which is closest to $g_y^{-1}y$, then $g_y^{-1}y\in D_g$ for every $g\neq e$. By definition, $g_y^{-1}y\in F$, so $ y\in g_y\cdot(B_r(\bar{x}_0)\cap F)$. Otherwise, consider points $p_i\neq\bar{x}_0$ on a minimal geodesic connecting $\bar{x}_0$ and $g_y^{-1}y$ such that $\lim\limits_{i\to\infty}d(g_y^{-1}y,p_i)=0$. Then $p_i\in D_g$ for every $i$ and every $g\neq e$ (since geodesics cannot branch). So $g_y^{-1}y\in \overline{F}$, $y\in g_y\cdot(B_r(\bar{x}_0)\cap \overline{F})$ .
		
	\end{proof}
	Since for every $g_1\neq g_2$ in $G$, $(g_1\cdot(B_r(\bar{x}_0)\cap F))\cap (g_2\cdot(B_r(\bar{x}_0)\cap F))=\emptyset$, taking the volume in Claim \ref{claim1}, we get $\#(D^G(\bar{x}_0,2r))\cdot \text{Vol}(B_r(x_0))\geq \text{Vol}(B_r(\bar{x}_0))$. 
	
	Proof of inequality(\ref{and}): It is clear that $\bigcup\limits_{g\in D(\bar{x}_0,r)}g\cdot (B_r(\bar{x}_0)\cap F)\subset B_{2r}(\bar{x}_0)$. The result follows by taking the volume.

\end{proof}

\section*{Acknowledgements}
The author thanks Professor Shicheng Xu for suggesting  the problem of the  first Betti number rigidity of open manifolds with nonnegative Ricci curvature to him.  The author thanks his advisor Professor Xiaochun Rong for pointing out to him the equivalence of condition(c) and (d)  in Theorem \ref{main} and for his sincere guidance.

	\clearpage
	\bibliography{refs}		
\end{document}